\documentclass[11pt]{amsart} 
\usepackage[foot]{amsaddr}
\usepackage{amsmath,amssymb,bm,color,float} 
\usepackage{graphicx}
\usepackage{mathrsfs}
\usepackage{dsfont}
\usepackage{pifont}
\usepackage{wrapfig}
\usepackage{hyperref}
\usepackage{multirow}

\def\3bar{{|\hspace{-.02in}|\hspace{-.02in}|}}

\newtheorem{theorem}{Theorem}[section]
\newtheorem{lemma}[theorem]{Lemma}

\def\t#1{\operatorname{#1}}

\numberwithin{equation}{section}

\def\an#1{\begin{align}#1\end{align}}
 \def\t#1{\hbox{\rm{#1}}}

  \numberwithin{equation}{section}

\def\3bar{{|\hspace{-.02in}|\hspace{-.02in}|}}

\title[Least-Squares Weak Galerkin]{A Least-Squares Weak Galerkin Finite Element Scheme for Cauchy Problems in Helmholtz}

\author{Chunmei Wang} 
\address{Department of Mathematics, University of Florida, Gainesville, FL 32611, USA.} 
\email{chunmei.wang@ufl.edu} 

\author{Shangyou Zhang}
\address{Department of Mathematical Sciences, University of Delaware, Newark, DE 19716, USA} 
\email{szhang@udel.edu}

\begin{document}
\begin{abstract}
This paper introduces and rigorously analyzes a least-squares weak Galerkin (LS-WG) finite element method for the severely ill-posed Cauchy problem associated with the Helmholtz equation. By utilizing a weak Laplacian operator defined on a space of discontinuous functions, the proposed framework facilitates the seamless treatment of complex boundary conditions and internal interfaces. We emphasize the geometric flexibility of the LS-WG scheme on general polygonal and polyhedral partitions. Furthermore, we prove the uniqueness of the numerical solution and derive optimal-order error estimates with respect to a specifically designed discrete energy norm. Extensive numerical experiments validate the theoretical convergence rates and demonstrate the algorithm's robustness and efficiency over traditional Galerkin approaches.
\end{abstract}

\keywords{
weak Galerkin, finite element methods,  least-squares, Cauchy problem,  Helmholtz,   weak Laplacian,  polygonal or polyhedral meshes. }
 
\subjclass[2010]{65N30, 65N15, 65N12, 65N20}
  \maketitle 
\section{Introduction}

In this work, we introduce a least-squares weak Galerkin (LS-WG) finite element method based on completely discontinuous functions to solve the Cauchy problem for the Helmholtz equation. To present the proposed method, we consider the following model problem: find a function $u$ such that
\begin{equation}\label{model}
\begin{cases} 
 \Delta u + k^2 u = f & \text{in } \Omega, \\
u = g_1, \quad \nabla u \cdot \mathbf{n} = g_2 & \text{on } \Gamma_1,
\end{cases}
\end{equation}
where $k > 0$ represents the wavenumber and $\Omega \subset \mathbb{R}^d$ ($d=2, 3$) is an open, bounded, polytopal domain. The boundary $\partial\Omega$ is partitioned into two disjoint, relatively open subsets $\Gamma_1$ and $\Gamma_2$, such that $\partial\Omega = \overline{\Gamma}_1 \cup \overline{\Gamma}_2$. Here, $\Gamma_1$ (with strictly positive measure, $|\Gamma_1|>0$) represents the accessible boundary where both Dirichlet and Neumann data are prescribed, while $\Gamma_2$ is the inaccessible boundary where no data is provided. 

The vector $\mathbf{n}$ denotes the unit outward normal to $\partial\Omega$. The source term $f$ and the boundary data $g_1, g_2$ are assumed to be sufficiently regular to ensure the well-posedness of the corresponding discrete formulation. The objective of this study is to reconstruct the solution $u$ throughout the domain $\Omega$ and its trace on the unknown boundary $\Gamma_2$. Given the inherent ill-posedness of this boundary value problem, we employ a stabilized numerical approach through the LS-WG framework to ensure theoretical accuracy and computational robustness.

The Cauchy problem for the Helmholtz equation, which seeks to determine an interior solution from over-determined data on a subset of the boundary, is a fundamental challenge in mathematical physics due to its severe ill-posedness in the sense of Hadamard \cite{hadamard}. Despite the inherent instability---where infinitesimal perturbations in boundary data can lead to exponentially large errors in the solution---the problem is of paramount importance in various inverse scattering and imaging applications. It arises naturally in non-destructive evaluation for detecting internal flaws or material properties from surface measurements, and in acoustic holography for reconstructing noise sources \cite{isakov, colton2013}. Furthermore, it plays a critical role in medical imaging, such as thermoacoustic tomography, and in geophysical exploration for subsurface material identification \cite{tadi, berntsson}. The ability to accurately reconstruct wave fields from partial boundary data is essential for interpreting physical phenomena where the interior of the domain is inaccessible for direct measurement.

Due to this inherent instability, classical numerical methods often fail, necessitating specialized regularization techniques. A well-known approach is the method of quasi-reversibility, which replaces the ill-posed second-order operator with a higher-order perturbed operator to restore stability \cite{lattes1969}. Iterative methods, such as the alternating procedure proposed by Kozlov et al., have also been widely adopted, wherein a sequence of well-posed boundary value problems is solved to converge toward the Cauchy solution \cite{kozlov1991}. In recent years, variational and finite element-based frameworks have gained traction. Specifically, the Least-Squares Weak Galerkin (LS-WG) method has emerged as a robust alternative; it transforms the ill-posed Cauchy problem into a symmetric positive definite (SPD) system while providing significant geometric flexibility on polygonal meshes \cite{wangzhang_cauchy}. Other approaches include boundary element methods (BEM) combined with Tikhonov regularization, Landweber iterations, and mollification methods that employ filtering to stabilize high-frequency components \cite{hanke, nanfuka}.

The Weak Galerkin (WG) finite element method, originally introduced in \cite{ellip_JCAM2013} and extensively expanded in \cite{wg1, wg2, wg3, wg4, wg5, wg6, wg7, wg8, wg9, wg10, wg11, wg12, wg13, wg14, wg15, wg16, wg17, wg18, wg19, wg20, wg21, itera, wz2023, wy3655}, represents a significant departure from traditional continuous finite element methods. By utilizing weak derivatives and enforcing weak continuity across element interfaces via specifically designed stabilizers, the WG framework naturally accommodates general polygonal and polyhedral meshes. A notable evolution of this framework is the Primal-Dual Weak Galerkin (PDWG) method \cite{pdwg1, pdwg2, pdwg3, pdwg4, pdwg5, pdwg6, pdwg7, pdwg8, pdwg9, pdwg10, pdwg11, pdwg12, pdwg13, pdwg14, pdwg15}, which formulates numerical approximations as constrained minimization problems. While PDWG offers favorable stability for non-self-adjoint problems, such as linear transport \cite{wwhyperbolic}, it often increases the number of global unknowns through the introduction of dual variables.

The proposed LS-WG method offers several transformative advantages for the Helmholtz Cauchy problem. While traditional finite element discretizations of the Helmholtz equation typically result in indefinite linear systems---which grow increasingly difficult to solve as the wavenumber $k$ increases---the least-squares formulation yields an \textbf{SPD} discrete system. This property is a significant computational asset, as it allows for the use of high-performance iterative solvers like the Conjugate Gradient (CG) method. By combining the inherent stability of least-squares minimization with the versatile nature of weak derivatives, the LS-WG approach provides a resilient and scalable numerical platform for recovering solutions in ill-posed settings.

The primary contributions of this paper are summarized as follows:
\begin{itemize}
    \item \textbf{Symmetry and Positivity:} The resulting discretized bilinear form is inherently symmetric and positive definite, yielding an SPD linear system that facilitates efficient computation via the CG method.
    \item \textbf{Stability at High Wavenumbers:} The least-squares framework provides a natural stabilization mechanism that effectively mitigates the numerical instabilities often encountered in standard Galerkin formulations of the Helmholtz equation.
    \item \textbf{Mesh Flexibility:} The method inherits the WG advantage of applicability to arbitrary polytopal meshes with hanging nodes, requiring neither matching grids nor $C^0$-continuous basis functions.
\end{itemize}

In this work, we provide a rigorous theoretical foundation for the LS-WG scheme, establish the uniqueness of the discrete solution, and derive optimal-order error estimates in a discrete energy norm. Extensive numerical experiments validate these findings and demonstrate the method's robustness in reconstructing solutions to severely ill-posed problems.

The remainder of this paper is organized as follows. Section 2 reviews the mathematical definition of the weak Laplacian. Section 3 introduces the LS-WG formulation for the Helmholtz Cauchy problem. Section 4 and 5 establish the uniqueness of the numerical solution and derive the error equations. Section 6 provides the proof of optimal-order error estimates. Finally, Section 7 presents numerical experiments demonstrating the stability and efficiency of the proposed framework.

Throughout this work, we adopt standard notation for Sobolev spaces and their associated norms. For any open, bounded domain $D \subset \mathbb{R}^d$ with a Lipschitz continuous boundary, $\|\cdot\|_{s,D}$ and $|\cdot|_{s,D}$ denote the norm and seminorm of the Sobolev space $H^s(D)$ for $s \ge 0$, respectively. The corresponding inner product is denoted by $(\cdot, \cdot)_{s,D}$. In the special case where $s=0$, the space $H^0(D)$ coincides with $L^2(D)$, with the norm and inner product denoted by $\|\cdot\|_D$ and $(\cdot, \cdot)_D$, respectively. For simplicity, the subscript $D$ is omitted when $D = \Omega$ or when the domain of integration is clear from context.

\section{Discrete Weak Laplacian}

In this section, we present the formal definitions of the weak Laplacian operator and its discrete counterpart.  

Let $T$ be a polygonal domain in $\mathbb{R}^2$ or a polyhedral domain in $\mathbb{R}^3$ with boundary $\partial T$. We define a \emph{weak function} on $T$ as an ordered triplet $v=\{v_0, v_b, \mathbf{v}_g\}$, where the components are defined as:
\begin{itemize}
    \item $v_0 \in L^2(T)$: represents the value of $v$ in the interior of $T$;
    \item $v_b \in L^2(\partial T)$: represents the value of $v$ on the boundary $\partial T$;
    \item $\mathbf{v}_g \in [L^2(\partial T)]^d$: represents the values of the gradient $\nabla v$ on the boundary $\partial T$.
\end{itemize}

A defining feature of the WG framework is that $v_b$ and $\mathbf{v}_g$ are defined independently and are not required to be the traces of $v_0$ or $\nabla v_0$ on $\partial T$, although such a choice is admissible. We denote the space of all weak functions on $T$ by
\begin{equation*} 
\mathcal{W}(T) = \left\{ v = \{v_0, v_b, \mathbf{v}_g\} : v_0 \in L^2(T), \ v_b \in L^2(\partial T), \ \mathbf{v}_g \in [L^2(\partial T)]^d \right\}.
\end{equation*}

The \emph{weak Laplacian} of a function $v \in \mathcal{W}(T)$, denoted by $\Delta_w v$, is defined as a linear functional in the dual space of $H^2(T)$ such that
\begin{equation*} 
(\Delta_w v, w)_T = (v_0, \Delta w)_T - \langle v_b, \nabla w \cdot \mathbf{n} \rangle_{\partial T} + \langle \mathbf{v}_g \cdot \mathbf{n}, w \rangle_{\partial T}, \quad \forall w \in H^2(T),
\end{equation*}
where $\mathbf{n}$ denotes the outward unit normal vector on $\partial T$, $(\cdot, \cdot)_T$ is the $L^2(T)$ inner product, and $\langle \cdot, \cdot \rangle_{\partial T}$ is the $L^2(\partial T)$ inner product.

Let $P_r(T)$ be the space of polynomials of degree at most $r$ on $T$. The \emph{discrete weak Laplacian} of $v \in \mathcal{W}(T)$, denoted by $\Delta_{w,r,T} v$, is defined as the unique polynomial in $P_r(T)$ satisfying 
\begin{equation}\label{dislap}
(\Delta_{w,r,T} v, w)_T = (v_0, \Delta w)_T - \langle v_b, \nabla w \cdot \mathbf{n} \rangle_{\partial T} + \langle \mathbf{v}_g \cdot \mathbf{n}, w \rangle_{\partial T},  
\end{equation}
 for all $w \in P_r(T)$.
If the interior component $v_0$ possesses $H^2$ regularity (i.e., $v_0 \in H^2(T)$), the discrete weak Laplacian can be equivalently expressed through integration by parts as:
\begin{equation}\label{dislap2}
(\Delta_{w,r,T} v, w)_T = (\Delta v_0, w)_T + \langle v_0 - v_b, \nabla w \cdot \mathbf{n} \rangle_{\partial T} + \langle (\mathbf{v}_g - \nabla v_0) \cdot \mathbf{n}, w \rangle_{\partial T},
\end{equation}
 for all $w \in P_r(T)$. 
 
This representation explicitly highlights the role of the boundary components $v_b$ and $\mathbf{v}_g$ in penalizing the jump between the interior and boundary values.
\section{Least-Squares Weak Galerkin Algorithm}\label{Section:WGFEM}

In this section, we introduce the least-squares weak Galerkin (LS-WG) finite element discretization for the Helmholtz Cauchy problem \eqref{model}. 

Let $\mathcal{T}_h$ be a shape-regular partition of the domain $\Omega$ into polygonal (2D) or polyhedral (3D) elements, following the definitions in \cite{wy3655}. We denote the set of all edges (or faces in 3D) in $\mathcal{T}_h$ by $\mathcal{E}_h$, and let $\mathcal{E}_h^0 = \mathcal{E}_h \setminus \partial\Omega$ represent the set of all interior edges. For each element $T \in \mathcal{T}_h$, $h_T$ denotes the diameter of $T$, and the global mesh size is defined as $h = \max_{T \in \mathcal{T}_h} h_T$.

For a given integer $m \ge 1$, we define the local weak finite element space $W_m(T)$ as:
\[
W_m(T) = \bigl\{ v = \{v_0, v_b, \mathbf{v}_g\} : v_0 \in P_m(T), \ v_b \in P_m(e), \ \mathbf{v}_g \in [P_{m-1}(e)]^d, \ e \subset \partial T \bigr\}.
\]
The global weak finite element space $W_h$ is constructed by patching the local spaces $W_m(T)$ across interior edges such that the boundary trace $v_b$ is single-valued on each $e \in \mathcal{E}_h^0$. We then define $W_h^0 \subset W_h$ as the subspace of weak functions with vanishing Cauchy data on the boundary $\Gamma_1$:
\[
W_h^0 = \bigl\{ v = \{v_0, v_b, \mathbf{v}_g\} \in W_h : v_b|_e = 0, \ \mathbf{v}_g \cdot \mathbf{n}|_e = 0, \ e \subset \Gamma_1 \bigr\}.
\]

For any $v \in W_h$, let $\Delta_w v$ denote the discrete weak Laplacian, computed element-wise as:
\[
(\Delta_w v)|_T = \Delta_{w,r,T}(v|_T), \quad \forall T \in \mathcal{T}_h.
\]

To enforce continuity between the interior and boundary components of the weak functions, we introduce the following stabilizer $s(\cdot, \cdot)$:
\begin{equation*}\label{stabilizer}
s(u, v) = \sum_{T \in \mathcal{T}_h}  k^2 h_T^{-3} \langle u_0 - u_b, v_0 - v_b \rangle_{\partial T} + k^2 h_T^{-1} \langle (\nabla u_0 - \mathbf{u}_g) \cdot \mathbf{n}, (\nabla v_0 - \mathbf{v}_g) \cdot \mathbf{n} \rangle_{\partial T},  
\end{equation*}
for all $u, v \in W_h$. 
 
The least-squares bilinear form $a(\cdot, \cdot)$ for the LS-WG method is then defined as:
\begin{equation}\label{bilinear_a}
a(u, v) = \sum_{T \in \mathcal{T}_h} \left( \Delta_w u + k^2u_0, \Delta_w v + k^2v_0 \right)_T + s(u, v), \quad \forall u, v \in W_h.
\end{equation}

Next, we define the necessary $L^2$ projection operators. For each $T \in \mathcal{T}_h$ and its corresponding edges $e \subset \partial T$, let:
\begin{itemize}
    \item $Q_0: L^2(T) \to P_m(T)$ be the local projection onto the interior polynomial space;
    \item $Q_b: L^2(e) \to P_m(e)$ be the local projection onto the boundary polynomial space;
    \item $Q_n: L^2(e) \to P_{m-1}(e)$ be the local projection for the normal component of the gradient;
    \item $\mathbf{Q}_g: [L^2(e)]^d \to [P_{m-1}(e)]^d$ be the local vector projection for the gradient component.
\end{itemize}
For the exact solution $u$ of \eqref{model}, its projection into the global space $W_h$ is denoted by $Q_h u = \{Q_0 u, Q_b u, \mathbf{Q}_g (\nabla u)\}$.

The LS-WG finite element scheme for the Cauchy problem \eqref{model} is formulated as follows:

\begin{center}
\fbox{
\begin{minipage}{0.9\textwidth}
\textbf{Algorithm 3.1 (LS-WG Scheme).} 
Find $u_h = \{u_0, u_b, \mathbf{u}_g\} \in W_h$ satisfying the boundary conditions $u_b = Q_b g_1$ and $\mathbf{u}_g \cdot \mathbf{n} = Q_n g_2$ on $\Gamma_1$, such that
\begin{equation}\label{al-general}
a(u_h, v_h) = \sum_{T \in \mathcal{T}_h} (f, \Delta_w v_h + k^2v_0)_T, \quad \forall v_h \in W_h^0.
\end{equation}
\end{minipage}
}
\end{center}
 
This least-squares formulation naturally transforms the inherently ill-posed Cauchy problem for the indefinite Helmholtz operator into a symmetric and positive-definite (SPD) algebraic system. Furthermore, the inclusion of the stabilizer $s(u, v)$ ensures that the method remains well-posed and that the interior and boundary components converge at optimal rates.

\section{Solution Uniqueness}\label{Section:EU}

In this section, we establish the existence and uniqueness of the solution to the LS-WG scheme \eqref{al-general}. We begin by identifying a key commutative property of the $L^2$ projection operators, which is essential for the subsequent analysis.

Recall $Q_0$ denote the locally defined $L^2$ projection onto the polynomial space $P_{m}(T)$  for each element $T \in \mathcal{T}_h$.

\begin{lemma}[Commutative Property]\label{Lemma:commute}
The $L^2$ projection operators $Q_h$ and $Q_0$ satisfy the following commutative property:
\begin{equation}\label{eq:commute2}
\Delta_w(Q_h w) = Q_0 (\Delta w), \quad \forall w \in H^2(\Omega).
\end{equation}
\end{lemma}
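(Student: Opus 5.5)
The plan is to verify the identity element by element, testing both sides against an arbitrary polynomial $\phi\in P_r(T)$. Since $\Delta_w(Q_hw)|_T$ is by definition the unique element of $P_r(T)$ determined by \eqref{dislap}, it suffices to show that $(\Delta_{w,r,T}(Q_hw),\phi)_T=(\Delta w,\phi)_T$ for all $\phi\in P_r(T)$. The right-hand side equals $(Q_0\Delta w,\phi)_T$ because $Q_0$ is the $L^2$ projection. For this to make sense, $Q_0$ must here act as the projection onto the range space $P_r(T)$ of $\Delta_w$. I will therefore read the statement with $r\le m-1$, so that $P_r(T)\subset P_m(T)$, and interpret $Q_0$ in \eqref{eq:commute2} as the projection onto $P_r(T)$; the degree constraint is forced by the argument below.

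First, I apply \eqref{dislap} with $v=Q_hw=\{Q_0w,Q_bw,\mathbf{Q}_g\nabla w\}$. This gives
\[
(\Delta_{w,r,T}Q_hw,\phi)_T=(Q_0w,\Delta\phi)_T-\langle Q_bw,\nabla\phi\cdot\mathbf{n}\rangle_{\partial T}+\langle \mathbf{Q}_g(\nabla w)\cdot\mathbf{n},\phi\rangle_{\partial T}.
\]
Second, I remove each projection using its defining orthogonality, which requires checking the degree of each test function:
\begin{itemize}
\item $\Delta\phi\in P_{r-2}(T)\subset P_m(T)$, so $(Q_0w,\Delta\phi)_T=(w,\Delta\phi)_T$;
\item on each edge $e$, $\nabla\phi\cdot\mathbf{n}|_e\in P_{r-1}(e)\subset P_m(e)$, since $\mathbf{n}$ is constant on the flat edge, so $Q_b$ can be dropped;
\item $\phi\,\mathbf{n}|_e\in[P_r(e)]^d$, so $\mathbf{Q}_g$ can be dropped provided $r\le m-1$.
\end{itemize}
This last point is exactly where the restriction $r\le m-1$ enters.

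After these reductions we have $(w,\Delta\phi)_T-\langle w,\nabla\phi\cdot\mathbf{n}\rangle_{\partial T}+\langle\nabla w\cdot\mathbf{n},\phi\rangle_{\partial T}$. Since $w\in H^2(T)$, integrating by parts twice (Green's second identity) collapses this to $(\Delta w,\phi)_T$. Summing over $T$ completes the argument.

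The only real obstacle is the bookkeeping of polynomial degrees: the gradient trace space $[P_{m-1}(e)]^d$ must contain $\phi|_e$ for every $\phi\in P_r(T)$, and $Q_0$ must be matched to $P_r$. If the paper intends $r=m$, the $\mathbf{Q}_g$ term would not drop, and one would instead have to use a projection $Q_n$ onto a richer normal-trace space. I would therefore state the degree assumption explicitly, for example $r=m-2$ or $r=m-1$, before carrying out the computation.
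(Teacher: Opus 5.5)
Your argument is the same as the paper's: test against polynomials on each $T$, remove $Q_0$, $Q_b$, $\mathbf{Q}_g$ by orthogonality, and finish with Green's second identity. Under your hypotheses every step is valid. Those hypotheses are $r\le m-1$, with the $Q_0$ on the right of \eqref{eq:commute2} read as the $L^2$ projection onto $P_r(T)$ rather than the $P_m(T)$ projection in the first slot of $Q_hw$.

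You depart from the paper exactly at the degree check you flagged, and there you are right and the paper is not. Its proof tests with $q\in P_m(T)$, i.e.\ $r=m$, and drops $\mathbf{Q}_g$ by citing $q\in P_m(T)$. But $\mathbf{Q}_g$ maps onto $[P_{m-1}(e)]^d$, so that step is unjustified. What one actually gets for $q\in P_m(T)$ is $(\Delta_wQ_hw-\Delta w,q)_T=\langle(\mathbf{Q}_g\nabla w-\nabla w)\cdot\mathbf{n},q\rangle_{\partial T}$, and this need not vanish. Take $m=r=1$, $T$ the triangle with vertices $(0,0)$, $(1,0)$, $(0,1)$, $w=x^2/2$ and $q=x$. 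Only the hypotenuse contributes, and
\[
(\Delta_wQ_hw-Q_0\Delta w,\,q)_T=\int_0^1\bigl(\tfrac12-t\bigr)t\,dt=-\tfrac{1}{12}\neq 0.
\]
So the lemma as printed is false for the spaces $W_m(T)$, and one of your two repairs is genuinely needed.

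The two repairs are not equivalent downstream:
\begin{itemize}
\item The enriched trace ($r=m$, normal gradient trace in $P_m(e)$) lets your proof run verbatim with $\phi\in P_m(T)$ and leaves Lemma~\ref{errorequa} intact.
\item With $r\le m-1$, Lemma~\ref{errorequa} picks up an extra term. Its test function $\Delta_wv_h+k^2v_0$ lies in $P_m(T)$ rather than $P_r(T)$, so the error equation gains $k^2\sum_T(\Delta u-Q_r\Delta u,v_0)_T$, with $Q_r$ the $L^2$ projection onto $P_r(T)$. This term is still controlled by the energy norm, since $k^2(I-Q_r)v_0=(I-Q_r)(\Delta_wv_h+k^2v_0)$.
\end{itemize}
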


\begin{proof}
For any $q \in P_{m}(T)$, the definition of the discrete weak Laplacian \eqref{dislap} gives:
\[
\begin{aligned}
(\Delta_w Q_h w, q)_T &= (Q_0 w, \Delta q)_T - \langle Q_b w, \nabla q \cdot \mathbf{n} \rangle_{\partial T} + \langle \mathbf{Q}_g(\nabla w) \cdot \mathbf{n}, q \rangle_{\partial T} \\
&= (w, \Delta q)_T - \langle w, \nabla q \cdot \mathbf{n} \rangle_{\partial T} + \langle \nabla w \cdot \mathbf{n}, q \rangle_{\partial T} \\
&= (\Delta w, q)_T = (Q_0 \Delta w, q)_T,
\end{aligned}
\]
where we have used the facts that $\Delta q \in P_{m-2}(T) \subset P_m(T)$, $\nabla q \cdot \mathbf{n} \in P_{m-1}(e) \subset P_m(e)$, and $q \in P_{m}(T)$. This proves \eqref{eq:commute2}.
\end{proof}

\begin{lemma}[Uniqueness]\label{thmunique1}
Assume that the continuous Cauchy problem \eqref{model} admits a unique solution. Then, the LS-WG finite element scheme \eqref{al-general} possesses a unique solution $u_h \in W_h$.
\end{lemma}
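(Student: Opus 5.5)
Since \eqref{al-general} is a square linear system (the trial functions satisfy inhomogeneous Cauchy data on $\Gamma_1$, the test space is $W_h^0$, and the difference of two solutions lies in $W_h^0$), existence follows from uniqueness. It therefore suffices to show that if $f=0$, $g_1=0$, $g_2=0$, then $u_h=0$. In that case $u_h\in W_h^0$, and taking $v_h=u_h$ gives $a(u_h,u_h)=0$. Hence
\[
\Delta_w u_h + k^2 u_0 = 0 \ \text{on each } T,\qquad u_0=u_b,\quad (\nabla u_0-\mathbf{u}_g)\cdot\mathbf{n}=0 \ \text{on each } \partial T.
\]

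\textbf{Step 1: recover a continuous function.} Since $u_b$ is single-valued on interior edges and $u_0|_{\partial T}=u_b$, the piecewise polynomial $u_0$ is continuous across edges, so $u_0\in H^1(\Omega)$. We also want normal-derivative continuity. If $\mathbf{u}_g$ is single-valued on interior edges (as is implicit in $W_h$ for the $v_g$ component), then $\nabla u_0\cdot\mathbf{n}$ agrees from both sides of each edge. Combined with the continuity of $u_0$, which forces the tangential derivatives to agree, this gives $\nabla u_0$ continuous. Thus $u_0\in H^2(\Omega)$ as a piecewise polynomial with $C^0$ traces and continuous normal fluxes. Moreover, on $\Gamma_1$ we have $u_0=u_b=0$ and $\nabla u_0\cdot\mathbf{n}=\mathbf{u}_g\cdot\mathbf{n}=0$.

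\textbf{Step 2: identify the weak Laplacian.} By \eqref{dislap2} with $v_0=u_0\in H^2(T)$, the boundary terms vanish because $u_0-u_b=0$ and $(\mathbf{u}_g-\nabla u_0)\cdot\mathbf{n}=0$. Hence $(\Delta_w u_h,w)_T=(\Delta u_0,w)_T$ for all $w\in P_r(T)$. Since $\Delta u_0\in P_{m-2}(T)\subset P_r(T)$ (for $r\ge m-2$), we get $\Delta_w u_h=\Delta u_0$ elementwise. Therefore $\Delta u_0+k^2u_0=0$ on each $T$, and since $u_0\in H^2(\Omega)$, this holds in $L^2(\Omega)$.

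\textbf{Step 3: conclude.} Now $u_0$ solves \eqref{model} with $f=0$, $g_1=g_2=0$. By the assumed uniqueness of the continuous Cauchy problem, $u_0\equiv0$. Then $u_b=u_0|_{\partial T}=0$, and $\mathbf{u}_g\cdot\mathbf{n}=\nabla u_0\cdot\mathbf{n}=0$ on each edge.

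\textbf{Main obstacle.} The delicate point is that the stabilizer controls only the normal part of $\mathbf{u}_g$. The tangential components of $\mathbf{u}_g$ are not forced to vanish, so the claimed uniqueness must be understood modulo these components. One can either note that the scheme only uses $\mathbf{u}_g\cdot\mathbf{n}$ (both $\Delta_w$ and $s$ depend solely on it), so $u_h$ is unique as an element of the effective space, or interpret $\mathbf{v}_g$ as represented by its normal component. I expect this point, together with justifying the interelement single-valuedness of $\mathbf{u}_g\cdot\mathbf{n}$ needed for $u_0\in H^2(\Omega)$, to require the most care.
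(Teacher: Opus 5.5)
Your proposal follows the paper's proof essentially step for step: $a(u_h,u_h)=0$ yields $s(u_h,u_h)=0$ and $\Delta_w u_h+k^2u_0=0$, then interelement single-valuedness of $u_b$ and $\mathbf{u}_g\cdot\mathbf{n}$ gives $u_0\in H^2(\Omega)$ with vanishing Cauchy data on $\Gamma_1$, \eqref{dislap2} gives $\Delta_w u_h=\Delta u_0$, and the assumed uniqueness of the continuous Cauchy problem gives $u_0\equiv 0$. Your caveat about the tangential part of $\mathbf{u}_g$ is correct and more careful than the paper, whose final line asserts $\mathbf{u}_g=\mathbf{0}$ even though $a(\cdot,\cdot)$ depends only on $\mathbf{u}_g\cdot\mathbf{n}$, so uniqueness holds only modulo that part; the paper also simply assumes single-valuedness of $\mathbf{u}_g\cdot\mathbf{n}$, and you could avoid needing it, because once $\Delta_w u_h=\Delta u_0$ holds elementwise, the polynomial identity $\Delta u_0+k^2u_0=0$ with $k>0$ already forces $u_0=0$ on each element by comparing top-degree terms.
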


\begin{proof}
It suffices to show that the homogeneous problem ($f=0$, $g_1=0$, $g_2=0$) admits only the trivial solution $u_h = 0$. Let $u_h \in W_h^0$ satisfy:
\[
a(u_h, v_h) = 0, \quad \forall v_h \in W_h^0.
\]
By choosing $v_h = u_h$, we obtain $a(u_h, u_h) = 0$. Based on the definition of the bilinear form \eqref{bilinear_a}, this implies:
\begin{enumerate}
    \item $s(u_h, u_h) = 0$, which yields $u_0 = u_b$ and $(\nabla u_0 - \mathbf{u}_g) \cdot \mathbf{n} = 0$ on $\partial T$ for all $T \in \mathcal{T}_h$.
    \item $\Delta_w u_h + k^2 u_0 = 0$ on each $T \in \mathcal{T}_h$.
\end{enumerate}

From the property $u_0 = u_b$ on $\partial T$ and the fact that $u_b$ is single-valued on the interior edges $\mathcal{E}_h^0$, we conclude that $u_0$ is continuous across all element interfaces, meaning $u_0 \in H^1(\Omega)$. Furthermore, since $(\nabla u_0 - \mathbf{u}_g) \cdot \mathbf{n} = 0$ and $\mathbf{u}_g \cdot \mathbf{n}$ is single-valued on interior edges, the normal component of the gradient $\nabla u_0 \cdot \mathbf{n}$ is also continuous across interfaces. This implies $u_0 \in H^2(\Omega)$.

Using \eqref{dislap2} with $u_0 = u_b$ and $\nabla u_0 \cdot \mathbf{n} = \mathbf{u}_g \cdot \mathbf{n}$, we find that on each element $T$:
\[
\Delta_w u_h = \Delta u_0.
\]
Substituting this into the residual equation yields:
\[
\Delta u_0 + k^2 u_0 = 0 \quad \text{in } \Omega.
\]
Finally, the boundary conditions for $u_h \in W_h^0$ imply $u_b = 0$ and $\mathbf{u}_g \cdot \mathbf{n} = 0$ on $\Gamma_1$, which translates to $u_0 = 0$ and $\nabla u_0 \cdot \mathbf{n} = 0$ on $\Gamma_1$. By the uniqueness assumption for the continuous Cauchy problem, it follows that $u_0 \equiv 0$ in $\Omega$. This in turn forces $u_b = 0$ and $\mathbf{u}_g = \mathbf{0}$. Thus, $u_h \equiv 0$, completing the proof.
\end{proof}

We define the energy norm on the finite element space $W_h^0$ by:
\begin{equation*}
\3bar v \3bar := \sqrt{a(v, v)}.
\end{equation*}
Following the logic established in Lemma \ref{thmunique1}, it is evident that $\3bar \cdot \3bar$ satisfies the properties of a norm on $W_h^0$, providing a robust framework for the subsequent error analysis.
\section{Error Equations}\label{Section:ErrorEquation}

In this section, we derive the error equation that governs the relationship between the exact solution $u$ and its LS-WG approximation $u_h$. Let $u$ be the exact solution of \eqref{model}, and let $u_h \in W_h$ be the solution to the discrete problem \eqref{al-general}. We define the discrete error as
\begin{equation*}\label{error}
e_h := u_h - Q_h u = \{u_0 - Q_0 u, \ u_b - Q_b u, \ \mathbf{u}_g - \mathbf{Q}_g \nabla u\}.
\end{equation*}

\begin{lemma}[Error Equation]\label{errorequa}
For any test function $v_h \in W_h^0$, the error function $e_h$ satisfies the following identity:
\begin{equation}\label{erroreqn}
a(e_h, v_h) = -s(Q_h u, v_h).
\end{equation}
\end{lemma}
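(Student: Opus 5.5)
The plan is to compute $a(Q_h u, v_h)$ directly and compare it with the right-hand side of the scheme \eqref{al-general}. Subtracting the two then yields the identity, since $a(e_h,v_h)=a(u_h,v_h)-a(Q_hu,v_h)$. By \eqref{al-general}, for every $v_h\in W_h^0$ we have
\[
a(u_h,v_h)=\sum_{T\in\mathcal{T}_h}(f,\Delta_w v_h+k^2v_0)_T .
\]
So the task reduces to showing
\[
a(Q_hu,v_h)=\sum_{T}(f,\Delta_w v_h+k^2v_0)_T+s(Q_hu,v_h),
\]
after which \eqref{erroreqn} follows at once.

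For the least-squares part of $a(Q_hu,v_h)$, I will invoke Lemma \ref{Lemma:commute}. Assuming $u\in H^2(\Omega)$, it gives $\Delta_w(Q_hu)=Q_0(\Delta u)$ elementwise. Hence
\[
\Delta_w Q_hu+k^2Q_0u=Q_0(\Delta u+k^2u)=Q_0f .
\]
Next I use that $\Delta_w v_h$ lies in $P_r(T)$ and $v_0$ lies in $P_m(T)$. The key point is that the discrete weak Laplacian is taken with degree $r\le m$ (in the commutative lemma the weak Laplacian is tested against $P_m(T)$, so I take $r=m$). Then $\Delta_w v_h+k^2v_0\in P_m(T)$, and the projection $Q_0$ can be removed:
\[
(Q_0f,\Delta_wv_h+k^2v_0)_T=(f,\Delta_wv_h+k^2v_0)_T .
\]
Summing over $T$ and adding the stabilizer term $s(Q_hu,v_h)$ gives the desired expression for $a(Q_hu,v_h)$.

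The one point needing care is consistency of the boundary data and of the polynomial degrees. For the boundary data, $u_h$ satisfies $u_b=Q_bg_1=Q_bu$ and $\mathbf{u}_g\cdot\mathbf{n}=Q_ng_2$ on $\Gamma_1$. On a flat edge, $Q_n(\nabla u\cdot\mathbf{n})=(\mathbf{Q}_g\nabla u)\cdot\mathbf{n}$, so $e_h\in W_h^0$. This is not strictly needed for the identity, but it is needed to make it useful later. For the degrees, the whole argument hinges on $r=m$, so that Lemma \ref{Lemma:commute} applies and $Q_0$ can be dropped against the test function. I expect this degree bookkeeping to be the only real subtlety.

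Note that $s(Q_hu,v_h)$ does not vanish in general, because $Q_0u$ and $Q_bu$ differ on $\partial T$. This is why the stabilizer appears as the consistency error on the right-hand side of \eqref{erroreqn}.
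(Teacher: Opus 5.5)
Your argument is the paper's proof in slightly different packaging: both use Lemma~\ref{Lemma:commute} to write $\Delta_w Q_h u + k^2 Q_0 u = Q_0 f$ on each $T$, drop $Q_0$ because $\Delta_w v_h + k^2 v_0 \in P_m(T)$, read the result as $a(Q_h u, v_h) - s(Q_h u, v_h) = \sum_T (f, \Delta_w v_h + k^2 v_0)_T$, and subtract this from \eqref{al-general}. One caveat, inherited from the paper rather than introduced by you, is that the degree bookkeeping you flagged breaks inside Lemma~\ref{Lemma:commute} itself: with $r=m$, the step $\langle \mathbf{Q}_g(\nabla w)\cdot\mathbf{n}, q\rangle_{\partial T} = \langle \nabla w\cdot\mathbf{n}, q\rangle_{\partial T}$ needs $q|_e \in P_{m-1}(e)$ (for $m=1$, $w=xy$, $q=x$ on the triangle with vertices $(0,0),(1,0),(0,1)$ the two sides differ by $1/12$), so the identity holds exactly only if $\mathbf{v}_g$ is taken in $[P_m(e)]^d$, and otherwise the right-hand side acquires the extra term $\sum_T \langle (\nabla u - \mathbf{Q}_g \nabla u)\cdot\mathbf{n}, \Delta_w v_h + k^2 v_0\rangle_{\partial T}$.
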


\begin{proof}
Let $\mathcal{L}$ denote the continuous operator defined by $\mathcal{L} u := \Delta u + k^2 u = f$. Testing this equation with the discrete operator $(\mathcal{L}_w v_h) |_T := \Delta_w v_h + k^2 v_0$ on each element $T \in \mathcal{T}_h$, we obtain
\begin{equation*}\label{eq:err_proof1}
\sum_{T \in \mathcal{T}_h} ( \Delta u + k^2 u, \Delta_w v_h + k^2 v_0)_T = \sum_{T \in \mathcal{T}_h}(f,  \Delta_w v_h + k^2 v_0)_T.
\end{equation*}
Note that $(\Delta_w v_h + k^2 v_0) \in P_{m}(T)$ on each element $T \in \mathcal{T}_h$. By applying the commutative property \eqref{eq:commute2} established in Lemma \ref{Lemma:commute}, specifically $Q_0(\Delta u) = \Delta_w Q_h u$, we have
\begin{equation}\label{eq:err_proof2}
\sum_{T \in \mathcal{T}_h} ( \Delta_w Q_h u + k^2 Q_0 u,  \Delta_w v_h + k^2 v_0)_T = \sum_{T \in \mathcal{T}_h} (f,  \Delta_w v_h + k^2 v_0)_T.
\end{equation}
Recalling the definition of the least-squares bilinear form $a(\cdot, \cdot)$ in \eqref{bilinear_a}, the left-hand side of \eqref{eq:err_proof2} can be rewritten as $a(Q_h u, v_h) - s(Q_h u, v_h)$. Thus,
\begin{equation}\label{eq:err_proof3}
a(Q_h u, v_h) - s(Q_h u, v_h) = \sum_{T \in \mathcal{T}_h} (f,  \Delta_w v_h + k^2 v_0)_T.
\end{equation}
Subtracting \eqref{eq:err_proof3} from the discrete LS-WG scheme \eqref{al-general} yields
\[
a(u_h, v_h) - \left(a(Q_h u, v_h) - s(Q_h u, v_h)\right) = 0.
\]
By the linearity of $a(\cdot, \cdot)$, we obtain
\[
a(u_h - Q_h u, v_h) + s(Q_h u, v_h) = 0,
\]
which simplifies to the desired error equation $a(e_h, v_h) = -s(Q_h u, v_h)$.
\end{proof}
\section{Error Estimates}\label{Section:ErrorEstimates}

In this section, we establish optimal-order error estimates for the LS-WG approximation in the energy norm. Throughout the following analysis, we denote by $C$ a generic positive constant that is independent of the mesh parameter $h$ and the wavenumber $k$.

\newcommand{\triplenorm}[1]{{\left\vert\kern-0.25ex\left\vert\kern-0.25ex\left\vert #1 \right\vert\kern-0.25ex\right\vert\kern-0.25ex\right\vert}}
For any $\phi \in H^1(T)$, the following trace inequality holds:
\begin{align}
\|\phi\|_{\partial T}^2 &\le C (h_T^{-1} \|\phi\|_T^2 + h_T \|\nabla \phi\|_T^2). \label{trace_H1} 
\end{align}
\begin{lemma}[Approximation Property]\label{lem:approx}
Let $\mathcal{T}_h$ be a shape-regular finite element partition of $\Omega$. For any $u \in H^{m+1}(\Omega)$, the following approximation estimate holds:
\begin{equation}\label{error_H1}
\sum_{T \in \mathcal{T}_h} \|u - Q_0 u\|_{s,T}^2 \le C h^{2(m+1-s)} \|u\|_{m+1}^2, \quad s=0, 1, 2.
\end{equation}
\end{lemma}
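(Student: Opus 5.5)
The plan is to reduce the estimate to a purely local statement on each element and then sum over the mesh. The key observation is that $Q_0$ is defined element-by-element as the $L^2(T)$ projection onto $P_m(T)$. It therefore reproduces polynomials, $Q_0 p = p$ for every $p \in P_m(T)$, and it is $L^2$-stable, $\|Q_0 \phi\|_T \le \|\phi\|_T$.

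The local argument runs as follows. Fix $T \in \mathcal{T}_h$ and choose a polynomial $p_T \in P_m(T)$ approximating $u$ optimally on $T$. A concrete choice is the averaged Taylor polynomial over a ball inscribed in $T$, or over the star-shaped subregion guaranteed by the shape-regularity assumptions of \cite{wy3655}. The Bramble--Hilbert lemma then gives
\[
|u - p_T|_{j,T} \le C h_T^{m+1-j} |u|_{m+1,T}, \qquad 0 \le j \le m+1 .
\]
We next split the error as $u - Q_0 u = (u - p_T) - Q_0(u - p_T)$. The first term is handled directly by the bound above. For the second term, $Q_0(u-p_T) \in P_m(T)$, so we apply the inverse inequality $|q|_{j,T} \le C h_T^{-j}\|q\|_T$ for polynomials on shape-regular polytopes, together with $L^2$-stability:
\[
|Q_0(u-p_T)|_{j,T} \le C h_T^{-j}\|u-p_T\|_T \le C h_T^{m+1-j}|u|_{m+1,T}.
\]
Collecting the seminorms for $j = 0, \dots, s$ gives
\[
\|u-Q_0u\|_{s,T} \le C h_T^{m+1-s}\|u\|_{m+1,T}
\]
for $s = 0, 1, 2$. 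Here we use $h_T \le h \le \operatorname{diam}\Omega$ to absorb the lower-order seminorms, which carry higher powers of $h$.

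Finally, we square, bound $h_T \le h$, and sum over $T$. Since the elements are disjoint, $\sum_T \|u\|_{m+1,T}^2 = \|u\|_{m+1}^2$, and this yields \eqref{error_H1}.

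The case $s=2$ with $m=1$ deserves a note. There $\Delta$ of a $P_1$ function vanishes, and the bound reads $\sum_T |u - Q_0 u|_{2,T}^2 \le C\|u\|_2^2$, with no power of $h$. This is consistent with the general formula and follows from the same argument with $j=2$, since $|p_T|_{2,T} = 0$.

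The main obstacle is justifying the Bramble--Hilbert estimate and the inverse inequality with constants uniform over general polygonal and polyhedral elements. On simplices both are standard via affine maps, but that argument is not available here. I would first invoke the shape-regularity assumptions of \cite{wy3655}: each $T$ is star-shaped with respect to a ball of radius comparable to $h_T$, and it admits a sub-triangulation into shape-regular simplices. These give the Dupont--Scott polynomial approximation and the inverse estimates with constants depending only on the shape-regularity parameters.
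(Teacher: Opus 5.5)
Your proof is correct under the hypothesis you state. The splitting $u-Q_0u=(u-p_T)-Q_0(u-p_T)$, Bramble--Hilbert for $u-p_T$, and the inverse inequality with $L^2$-stability for the polynomial part give $\|u-Q_0u\|_{s,T}\le C h_T^{m+1-s}\|u\|_{m+1,T}$. Summing over elements then gives the lemma, including the case $s=2$, $m=1$. The paper gives no proof to compare against: it quotes the estimate as standard for partitions that are shape regular in the sense of \cite{wy3655}.

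The point to adjust is how you describe that framework. As I understand \cite{wy3655}, its shape-regularity assumptions are not star-shapedness with respect to a ball. They ask for a shape-regular circumscribed simplex $S(T)\supset T$ with diameter comparable to $h_T$ and bounded overlap, together with volume, edge-length and pyramid conditions. These do not imply chunkiness: a U-shaped polygon satisfies them but is not star-shaped with respect to any point.

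In that setting the natural proof works on $S(T)$ instead of $T$:
\begin{enumerate}
\item Extend $u$ to $\tilde u\in H^{m+1}(\mathbb{R}^d)$ and approximate $\tilde u$ by a polynomial on the simplex $S(T)$, where affine scaling is available.
\item Restrict the approximation to $T$.
\item Obtain the inverse estimate on $T$ from $\|q\|_{S(T)}\le C\|q\|_T$ for polynomials $q$. This follows from $|T|\ge c\,|S(T)|$.
\item Sum over elements using the bounded overlap of the simplices and the boundedness of the extension.
\end{enumerate}

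Your Dupont--Scott route is more intrinsic: it needs no extension of $u$ beyond $\Omega$ and no overlap condition. However, it rests on an extra geometric hypothesis, each element star-shaped with respect to a ball of radius comparable to $h_T$, which is not part of the cited framework. The circumscribed-simplex route also covers non-star-shaped elements. For the meshes in the experiments, including the non-convex pentagons of the second grid family, both sets of hypotheses hold, so either argument applies.
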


\begin{theorem} \label{thm:convergence}
Let $u \in H^{m+1}(\Omega)$ be the exact solution of the Cauchy Helmholtz problem \eqref{model}, and let $u_h \in W_h$ be the LS-WG solution defined by \eqref{al-general}. Then, there exists a constant $C > 0$ such that
\begin{equation}\label{main_estimate}
\triplenorm{u_h - Q_h u} \le C kh^{m-1} \|u\|_{m+1}.
\end{equation}
\end{theorem}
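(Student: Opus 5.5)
The plan is to take $v_h = e_h$ in the error equation of Lemma \ref{errorequa}. First I would check that $e_h \in W_h^0$. On $\Gamma_1$ we have $u_b = Q_b g_1 = Q_b u$. The normal flux $\mathbf{u}_g\cdot\mathbf{n} = Q_n g_2$ agrees with $(\mathbf{Q}_g\nabla u)\cdot\mathbf{n}$, because $\mathbf{n}$ is constant on each flat edge $e$, so the projection commutes with the normal component. Hence $e_h \in W_h^0$, and
\[
\triplenorm{e_h}^2 = a(e_h,e_h) = -s(Q_h u, e_h) \le s(Q_hu,Q_hu)^{1/2}\, s(e_h,e_h)^{1/2} \le s(Q_hu,Q_hu)^{1/2}\,\triplenorm{e_h}.
\]
Here I use Cauchy--Schwarz for the symmetric positive semidefinite form $s(\cdot,\cdot)$ and the fact that $s(v,v)\le a(v,v)$. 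The theorem therefore reduces to the consistency bound $s(Q_hu,Q_hu)^{1/2} \le C k h^{m-1}\|u\|_{m+1}$.

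To bound the first stabilizer term, I would write
\[
Q_0u - Q_bu = (Q_0 u - u) + (u - Q_b u)
\]
on each edge. Since $Q_b$ is the $L^2(e)$ projection and $Q_0u|_e\in P_m(e)$, we have $\|Q_0u - Q_bu\|_e = \|Q_b(Q_0u - u)\|_e \le \|Q_0u - u\|_e$. The trace inequality \eqref{trace_H1} then gives
\[
h_T^{-3}\|Q_0u-Q_bu\|_{\partial T}^2 \le C\bigl(h_T^{-4}\|u-Q_0u\|_T^2 + h_T^{-2}\|\nabla(u-Q_0u)\|_T^2\bigr).
\]
Summing over $T$ and applying Lemma \ref{lem:approx} with $s=0,1$ yields $C h^{2(m-1)}\|u\|_{m+1}^2$.

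The second stabilizer term is handled the same way. On each edge,
\[
(\nabla Q_0u - \mathbf{Q}_g\nabla u)\cdot\mathbf{n} = (\nabla Q_0 u-\nabla u)\cdot\mathbf{n} + (\nabla u - \mathbf{Q}_g\nabla u)\cdot\mathbf{n}.
\]
Because $\nabla Q_0 u|_e \in [P_{m-1}(e)]^d$ and $\mathbf{Q}_g$ is the $L^2$ projection onto that space, the whole expression equals $\bigl(\mathbf{Q}_g(\nabla Q_0u - \nabla u)\bigr)\cdot\mathbf{n}$. It is therefore bounded by $\|\nabla(Q_0u-u)\|_e$. The trace inequality with $\phi = \nabla(u-Q_0u)$ gives
\[
h_T^{-1}\|\cdot\|_{\partial T}^2 \le C\bigl(h_T^{-2}|u-Q_0u|_{1,T}^2 + |u-Q_0u|_{2,T}^2\bigr),
\]
and Lemma \ref{lem:approx} with $s=1,2$ again gives $Ch^{2(m-1)}\|u\|_{m+1}^2$. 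Both terms carry the factor $k^2$, so $s(Q_hu,Q_hu)\le Ck^2h^{2(m-1)}\|u\|_{m+1}^2$. Taking square roots completes the proof.

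The main obstacle is the bookkeeping in this consistency estimate. The key points are that the projections $Q_b$ and $\mathbf{Q}_g$ reduce the differences to the interior approximation error $u-Q_0u$, and that the powers of $h_T$ line up so that the weights $h_T^{-3}$ and $h_T^{-1}$ each produce exactly $h^{2(m-1)}$. This also requires $u\in H^{2}$ locally for the $s=2$ case, which is guaranteed since $m\ge1$. A secondary check is the membership $e_h\in W_h^0$, which relies on edges being flat so that the normal projection commutes with $\mathbf{Q}_g$.
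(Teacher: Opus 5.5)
Your proposal is correct and follows the paper's proof. You test the error equation with $e_h$, apply Cauchy--Schwarz to the semidefinite form $s(\cdot,\cdot)$ together with $s(e_h,e_h)\le a(e_h,e_h)$, and bound the two terms of $s(Q_hu,Q_hu)$ using the trace inequality and the approximation estimate. You also spell out what the paper uses without comment: that $e_h\in W_h^0$, and the projection reductions $\|Q_0u-Q_bu\|_e\le\|Q_0u-u\|_e$ and $\|(\nabla Q_0u-\mathbf{Q}_g\nabla u)\cdot\mathbf{n}\|_e\le\|\nabla(Q_0u-u)\|_e$. One small point, shared with the paper: because of the element weights $h_T^{-4}$ and $h_T^{-2}$, the summation step actually needs the elementwise estimate $\|u-Q_0u\|_{s,T}\le Ch_T^{m+1-s}\|u\|_{m+1,T}$, not the global form stated in Lemma~\ref{lem:approx}.
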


\begin{proof}
We begin by setting the test function $v_h = e_h$ in the error equation \eqref{erroreqn} derived in Lemma \ref{errorequa}. This yields:
\[
\triplenorm{e_h}^2 = a(e_h, e_h) = -s(Q_h u, e_h).
\]
Applying the Cauchy--Schwarz inequality to the stabilization form $s(\cdot, \cdot)$, we obtain:
\[
\triplenorm{e_h}^2 \le |s(Q_h u, e_h)| \le \sqrt{s(Q_h u, Q_h u)} \sqrt{s(e_h, e_h)} \le \sqrt{s(Q_h u, Q_h u)} \triplenorm{e_h},
\]
which implies $\triplenorm{e_h} \le \sqrt{s(Q_h u, Q_h u)}$. We now proceed to bound the two components of the stabilizer $s(Q_h u, Q_h u)$ individually.
 
  Applying the trace inequality \eqref{trace_H1} and the approximation estimate \eqref{error_H1}, we find:
\[
\begin{aligned}
&\sum_{T \in \mathcal{T}_h}k^2 h_T^{-3} \|Q_0 u - Q_b u\|_{\partial T}^2\\ &\le C \sum_{T \in \mathcal{T}_h} k^2 h_T^{-3} \left( h_T^{-1} \|Q_0 u - u\|_T^2 + h_T \|Q_0 u - u\|_{1,T}^2 \right) \\
&\le C \sum_{T \in \mathcal{T}_h} k^2 \left( h_T^{-4} h_T^{2m+2} + h_T^{-2} h_T^{2m} \right) \|u\|_{m+1,T}^2 \\
&\le C k^2 h^{2m-2} \|u\|_{m+1}^2.
\end{aligned}
\]
 
Similarly, applying the trace inequality \eqref{trace_H1} and the approximation estimate \eqref{error_H1}:
\[
\begin{aligned}
&\sum_{T \in \mathcal{T}_h} k^2 h_T^{-1} \|(\nabla Q_0 u - \mathbf{Q}_g \nabla u) \cdot \mathbf{n}\|_{\partial T}^2\\ &\le \sum_{T \in \mathcal{T}_h} k^2 h_T^{-1} \|\nabla Q_0 u - \nabla u\|_{\partial T}^2 \\
&\le C \sum_{T \in \mathcal{T}_h} k^2 h_T^{-1} \left( h_T^{-1} \|\nabla Q_0 u - \nabla u\|_{0,T}^2 + h_T \|\nabla Q_0 u - \nabla u\|_{1,T}^2 \right) \\
&\le C \sum_{T \in \mathcal{T}_h} k^2 (h_T^{-2} h_T^{2m} + h_T^{2m-2}) \|u\|_{m+1,T}^2 \\
&\le C k^2 h^{2m-2} \|u\|_{m+1}^2.
\end{aligned}
\]
Combining the estimates, we conclude $\triplenorm{e_h}^2 \le  C k^2 h^{2m-2} \|u\|_{m+1}^2$, which yields the desired estimate \eqref{main_estimate}.
\end{proof}

\section{Numerical experiment}

In the first numerical test,  we solve the Cauchy problem for the Helmholtz equation \eqref{model}, where 
\an{\label{data}  \Omega=(0,1)\times(0,1), \ \ \Gamma_1=\partial \Omega \setminus \{0\}\times (0,1), \ \ k^2=10 \ \t{or} \ 10^{6} ,  }
and  $(f, g_1, g_2)$ are chosen so that the exact solution is smooth, 
\an{\label{s2}
   u =-(2x^3 + y + 1)^2.  }  
 The solution \eqref{s2} is computed on the triangular grids shown in Figure \ref{f-2}, and on the non-convex polygonal 
  grids shown in Figure \ref{f-5}, by 
  the weak Galerkin $P_k$-$P_k$-$P_{k-1}^2$/$P_k^{2\times 2}$ finite elements, $k=2,3$ and $4$.
The results are listed in Tables \ref{t1}-\ref{t3}, 
    where we can see that the optimal orders of convergence 
  are achieved roughly.  
  In these tables, $G_i$ denotes the $i$-th grid, e.g., $G_3$ in Table \ref{t2} is shown in
    Figure \ref{f-2} or Figure \ref{f-5} .

\begin{figure}[H]
\begin{center}\setlength\unitlength{2.4pt}\centering 
 \begin{picture}(140,45)(0,0) \put(0,41){$G_1:$}  \put(50,41){$G_2:$} \put(100,41){$G_3:$} 
  
\def\sq{\begin{picture}(40,40)(0,0) \put(0,40){\line(1,-1){40}}
  \multiput(0,0)(40,0){2}{\line(0,1){40}}\multiput(0,0)(0,40){2}{\line(1,0){40}} \end{picture} }
  
\put(0,0){\begin{picture}(40,40)(0,0)
  \multiput(0,0)(0,40){1}{\multiput(0,0)(40,0){1}{\sq}} 
  \end{picture} }
  
\put(50,0){\setlength\unitlength{1.2pt}\begin{picture}(40,40)(0,0)
  \multiput(0,0)(0,40){2}{\multiput(0,0)(40,0){2}{\sq}} 
  \end{picture} } 
\put(100,0){\setlength\unitlength{0.6pt}\begin{picture}(40,40)(0,0)
  \multiput(0,0)(0,40){4}{\multiput(0,0)(40,0){4}{\sq}} 
  \end{picture} } 
\end{picture}\end{center}
\caption{The triangular  grids used in Tables \ref{t1}--\ref{t9}. }
\label{f-2}
\end{figure}

\begin{figure}[H]
\begin{center}\setlength\unitlength{2.4pt}\centering 
 \begin{picture}(140,45)(0,0) \put(0,41){$G_1:$}  \put(50,41){$G_2:$} \put(100,41){$G_3:$} 
  
\def\sq{\begin{picture}(40,40)(0,0) \put(0,0){\line(1,3){10}}  \put(40,40){\line(-1,-3){10}} \put(10,30){\line(1,-1){20}}
  \multiput(0,0)(40,0){2}{\line(0,1){40}}\multiput(0,0)(0,40){2}{\line(1,0){40}} \end{picture} }
  
\put(0,0){\begin{picture}(40,40)(0,0)
  \multiput(0,0)(0,40){1}{\multiput(0,0)(40,0){1}{\sq}} 
  \end{picture} }
  
\put(50,0){\setlength\unitlength{1.2pt}\begin{picture}(40,40)(0,0)
  \multiput(0,0)(0,40){2}{\multiput(0,0)(40,0){2}{\sq}} 
  \end{picture} } 

\put(100,0){\setlength\unitlength{0.6pt}\begin{picture}(40,40)(0,0)
  \multiput(0,0)(0,40){4}{\multiput(0,0)(40,0){4}{\sq}} 
  \end{picture} } 
\end{picture}\end{center}
\caption{The non-convex polygonal grids used in Tables \ref{t1}--\ref{t9}. }
\label{f-5}
\end{figure}

\begin{table}[H]
  \centering  \renewcommand{\arraystretch}{1.1}
  \caption{Error profile by the $P_2$ WG element  for computing \eqref{s2}. }
  \label{t1}
\begin{tabular}{c|cc|cc}
\hline
Grid $G_i$ & \quad $\| u-u_h\|_{0}$ & $O(h^r)$ & \  $\|\Delta_w( u-u_h)\|_0 $& $O(h^r)$   \\ \hline
    &  \multicolumn{4}{c}{On triangular meshes (Figure \ref{f-2}), $k^2=10$}    \\
\hline  
 4&    0.752E-02 &  2.8&    0.314E+01 &  1.3\\
 5&    0.121E-02 &  2.6&    0.147E+01 &  1.1\\
 6&    0.248E-03 &  2.3&    0.726E+00 &  1.0\\
\hline 
    &  \multicolumn{4}{c}{On triangular meshes (Figure \ref{f-2}),  $k^2=10^{6}$}    \\
\hline  
 4&    0.424E-03 &  0.8&    0.799E+01 &  0.9\\
 5&    0.209E-03 &  1.0&    0.344E+01 &  1.2\\
 6&    0.375E-04 &  2.5&    0.182E+01 &  0.9\\
\hline 
    &  \multicolumn{4}{c}{ On  polygonal meshes (Figure \ref{f-5}),  $k^2=10$}    \\
\hline   
 4&    0.102E-01 &  3.0&    0.133E+02 &  1.0\\
 5&    0.117E-02 &  3.1&    0.660E+01 &  1.0\\
 6&    0.135E-03 &  3.1&    0.329E+01 &  1.0\\
\hline 
    &  \multicolumn{4}{c}{ On  polygonal meshes (Figure \ref{f-5}),  $k^2=10^{6}$}    \\
\hline   
 4&    0.223E-02 &  0.5&    0.213E+02 &  1.1\\
 5&    0.731E-03 &  1.6&    0.772E+01 &  1.5\\
 6&    0.116E-03 &  2.7&    0.389E+01 &  1.0\\
\hline 
    \end{tabular}%
\end{table}%

\begin{table}[H]
  \centering  \renewcommand{\arraystretch}{1.1}
  \caption{Error profile by the $P_3$ WG element  for computing \eqref{s2}. }
  \label{t2}
\begin{tabular}{c|cc|cc}
\hline
Grid $G_i$ & \quad $\| u-u_h\|_{0}$ & $O(h^r)$ & \  $\|\Delta_w( u-u_h)\|_0 $& $O(h^r)$   \\ \hline
    &  \multicolumn{4}{c}{On triangular meshes (Figure \ref{f-2}), $k^2=10$}    \\
\hline  
 3&    0.142E-02 &  4.2&    0.765E+00 &  2.1\\
 4&    0.659E-04 &  4.4&    0.184E+00 &  2.1\\
 5&    0.354E-05 &  4.2&    0.463E-01 &  2.0\\
\hline 
    &  \multicolumn{4}{c}{On triangular meshes (Figure \ref{f-2}),  $k^2=10^{6}$}    \\
\hline  
 3&    0.151E-03 &  2.2&    0.116E+01 &  2.1\\
 4&    0.254E-04 &  2.6&    0.191E+00 &  2.6\\
 5&    0.347E-05 &  2.9&    0.374E-01 &  2.4\\
\hline 
    &  \multicolumn{4}{c}{ On  polygonal meshes (Figure \ref{f-5}),  $k^2=10$}    \\
\hline   
 3&    0.555E-02 &  3.9&    0.857E+01 &  1.9\\
 4&    0.347E-03 &  4.0&    0.217E+01 &  2.0\\
 5&    0.507E-04 &  2.8&    0.547E+00 &  2.0\\
\hline 
    &  \multicolumn{4}{c}{ On  polygonal meshes (Figure \ref{f-5}),  $k^2=10^{6}$}    \\
\hline   
 3&    0.727E-03 &  2.3&    0.107E+02 &  2.0\\
 4&    0.122E-03 &  2.6&    0.257E+01 &  2.1\\
 5&    0.139E-04 &  3.1&    0.642E+00 &  2.0\\
\hline 
    \end{tabular}%
\end{table}%

\begin{table}[H]
  \centering  \renewcommand{\arraystretch}{1.1}
  \caption{Error profile by the $P_4$ WG element  for computing \eqref{s2}. }
  \label{t3}
\begin{tabular}{c|cc|cc}
\hline
Grid $G_i$ & \quad $\| u-u_h\|_{0}$ & $O(h^r)$ & \  $\|\Delta_w( u-u_h)\|_0 $& $O(h^r)$   \\ \hline
    &  \multicolumn{4}{c}{On triangular meshes (Figure \ref{f-2}), $k^2=10$}    \\
\hline  
 3&    0.281E-01 &  4.8&    0.530E+02 &  2.8\\
 4&    0.713E-02 &  2.0&    0.110E+02 &  2.3\\
 5&    0.387E-03 &  4.2&    0.143E+01 &  2.9\\
\hline 
    &  \multicolumn{4}{c}{On triangular meshes (Figure \ref{f-2}),  $k^2=10^{6}$}    \\
\hline  
 3&    0.161E-02 &  2.8&    0.776E+02 &  2.8\\
 4&    0.223E-03 &  2.8&    0.216E+02 &  1.8\\
 5&    0.208E-04 &  3.4&    0.191E+01 &  3.5\\
\hline 
    &  \multicolumn{4}{c}{ On  polygonal meshes (Figure \ref{f-5}),  $k^2=10$}    \\
\hline   
 3&    0.382E-03 &  5.0&    0.180E+01 &  3.0\\
 4&    0.223E-03 &  0.8&    0.225E+00 &  3.0\\
 5&    0.397E-01 &  ---&    0.447E+00 &  ---\\
\hline 
    &  \multicolumn{4}{c}{ On  polygonal meshes (Figure \ref{f-5}),  $k^2=10^{6}$}    \\
\hline   
 3&    0.543E-04 &  3.9&    0.174E+01 &  3.0\\
 4&    0.557E-05 &  3.3&    0.185E+00 &  3.2\\
 5&    0.268E-05 &  1.1&    0.215E+00 &  ---\\
\hline 
    \end{tabular}%
\end{table}%

In the second numerical test,  we solve a Cauchy problem \eqref{model} with the data \eqref{data} and an oscillating exact solution 
\an{\label{s3}
   u =\sin(4\pi x) \sin(4 \pi y).  }   
The solution makes the Cauchy problem more ill.  
The computational results are thus worse than those for the first solution \eqref{s2}, when $k^2$ is small.
Again, we compute \eqref{s3} on the triangular grids (Figure \ref{f-2}), and on the non-convex polygonal 
  grids(Figure \ref{f-5}), by 
  the weak Galerkin $P_k$-$P_k$-$P_{k-1}^2$/$P_k^{2\times 2}$ finite elements, $k=2,3$ and $4$.
The results are listed in Tables \ref{t4}-\ref{t6}.

\begin{table}[H]
  \centering  \renewcommand{\arraystretch}{1.1}
  \caption{Error profile by the $P_2$ WG element  for computing \eqref{s3}. }
  \label{t4}
\begin{tabular}{c|cc|cc}
\hline
Grid $G_i$ & \quad $\| u-u_h\|_{0}$ & $O(h^r)$ & \  $\|\Delta_w( u-u_h)\|_0 $& $O(h^r)$   \\ \hline
    &  \multicolumn{4}{c}{On triangular meshes (Figure \ref{f-2}), $k^2=10$}    \\
\hline  
 4&    0.148E+00 &  0.7&    0.406E+02 &  1.2\\
 5&    0.712E-01 &  1.1&    0.202E+02 &  1.0\\
 6&    0.377E-01 &  0.9&    0.103E+02 &  1.0\\
\hline 
    &  \multicolumn{4}{c}{On triangular meshes (Figure \ref{f-2}),  $k^2=10^{6}$}    \\
\hline  
 4&    0.718E-02 &  1.1&    0.121E+03 &  0.4\\
 5&    0.289E-02 &  1.3&    0.537E+02 &  1.2\\
 6&    0.594E-03 &  2.3&    0.332E+02 &  0.7\\
\hline 
    &  \multicolumn{4}{c}{ On  polygonal meshes (Figure \ref{f-5}),  $k^2=10$}    \\
\hline   
 4&    0.224E+00 &  1.6&    0.246E+03 &  0.5\\
 5&    0.976E-01 &  1.2&    0.128E+03 &  0.9\\
 6&    0.613E-01 &  0.7&    0.643E+02 &  1.0\\
\hline 
    &  \multicolumn{4}{c}{ On  polygonal meshes (Figure \ref{f-5}),  $k^2=10^{6}$}    \\
\hline   
 4&    0.409E-01 &  0.0&    0.334E+03 &  0.3\\
 5&    0.126E-01 &  1.7&    0.152E+03 &  1.1\\
 6&    0.212E-02 &  2.6&    0.711E+02 &  1.1\\
\hline 
    \end{tabular}%
\end{table}%

\begin{table}[H]
  \centering  \renewcommand{\arraystretch}{1.1}
  \caption{Error profile by the $P_3$ WG element  for computing \eqref{s3}. }
  \label{t5}
\begin{tabular}{c|cc|cc}
\hline
Grid $G_i$ & \quad $\| u-u_h\|_{0}$ & $O(h^r)$ & \  $\|\Delta_w( u-u_h)\|_0 $& $O(h^r)$   \\ \hline
    &  \multicolumn{4}{c}{On triangular meshes (Figure \ref{f-2}), $k^2=10$}    \\
\hline  
 3&    0.448E+00 &  0.8&    0.114E+03 &  0.7\\
 4&    0.743E-01 &  2.6&    0.254E+02 &  2.2\\
 5&    0.742E-02 &  3.3&    0.657E+01 &  2.0\\
\hline 
    &  \multicolumn{4}{c}{On triangular meshes (Figure \ref{f-2}),  $k^2=10^{6}$}    \\
\hline  
 3&    0.147E-01 &  2.0&    0.249E+03 &  0.3\\
 4&    0.291E-02 &  2.3&    0.444E+02 &  2.5\\
 5&    0.407E-03 &  2.8&    0.985E+01 &  2.2\\
\hline 
    &  \multicolumn{4}{c}{ On  polygonal meshes (Figure \ref{f-5}),  $k^2=10$}    \\
\hline   
 3&    0.109E+01 &  3.7&    0.124E+04 &  0.7\\
 4&    0.917E-01 &  3.6&    0.395E+03 &  1.6\\
 5&    0.103E-01 &  3.2&    0.103E+03 &  1.9\\
\hline 
    &  \multicolumn{4}{c}{ On  polygonal meshes (Figure \ref{f-5}),  $k^2=10^{6}$}    \\
\hline   
 3&    0.781E-01 &  0.0&    0.167E+04 &  0.1\\
 4&    0.235E-01 &  1.7&    0.344E+03 &  2.3\\
 5&    0.295E-02 &  3.0&    0.104E+03 &  1.7\\
\hline 
    \end{tabular}%
\end{table}%

\begin{table}[H]
  \centering  \renewcommand{\arraystretch}{1.1}
  \caption{Error profile by the $P_4$ WG element  for computing \eqref{s3}. }
  \label{t6}
\begin{tabular}{c|cc|cc}
\hline
Grid $G_i$ & \quad $\| u-u_h\|_{0}$ & $O(h^r)$ & \  $\|\Delta_w( u-u_h)\|_0 $& $O(h^r)$   \\ \hline
    &  \multicolumn{4}{c}{On triangular meshes (Figure \ref{f-2}), $k^2=10$}    \\
\hline  
 3&    0.483E-01 &  4.0&    0.525E+02 &  2.8\\
 4&    0.941E-02 &  2.4&    0.110E+02 &  2.3\\
 5&    0.663E-03 &  3.8&    0.143E+01 &  2.9\\
\hline 
    &  \multicolumn{4}{c}{On triangular meshes (Figure \ref{f-2}),  $k^2=10^{6}$}    \\
\hline  
 3&    0.657E-02 &  3.5&    0.564E+02 &  3.1\\
 4&    0.728E-03 &  3.2&    0.145E+02 &  2.0\\
 5&    0.402E-04 &  4.2&    0.157E+01 &  3.2\\
\hline 
    &  \multicolumn{4}{c}{ On  polygonal meshes (Figure \ref{f-5}),  $k^2=10$}    \\
\hline   
 3&    0.106E+01 &  5.4&    0.390E+04 &  2.3\\
 4&    0.263E-01 &  5.3&    0.469E+03 &  3.1\\
 5&    0.119E-01 &  1.1&    0.607E+02 &  2.9\\
\hline 
    &  \multicolumn{4}{c}{ On  polygonal meshes (Figure \ref{f-5}),  $k^2=10^{6}$}    \\
\hline   
 3&    0.945E-01 &  2.7&    0.303E+04 &  2.8\\
 4&    0.101E-01 &  3.2&    0.424E+03 &  2.8\\
 5&    0.585E-03 &  4.1&    0.592E+02 &  2.8\\
\hline 
    \end{tabular}%
\end{table}%

In the third numerical test,  we solve the Cauchy problem  \eqref{model} with the data \eqref{data} and an internal layer solution, 
\an{\label{s5}
   u =(y^2 - 2 y) (1 + \tanh(20x-10)).  }  
The  $P_4$ WG finite element solution for \eqref{s5} and the errors on the $G_5$ triangular grid are plotted
  in Figure \ref{f-s5}.
 When $k^2$ is small, a large error occurs at the free-boundary.
When $k^2$ is large, the error occurs only at the internal layer.

\begin{figure}[H]
 \begin{center}\setlength\unitlength{1.0pt}
\begin{picture}(400,376)(0,0) 
  \put(0,-125){\includegraphics[width=400pt]{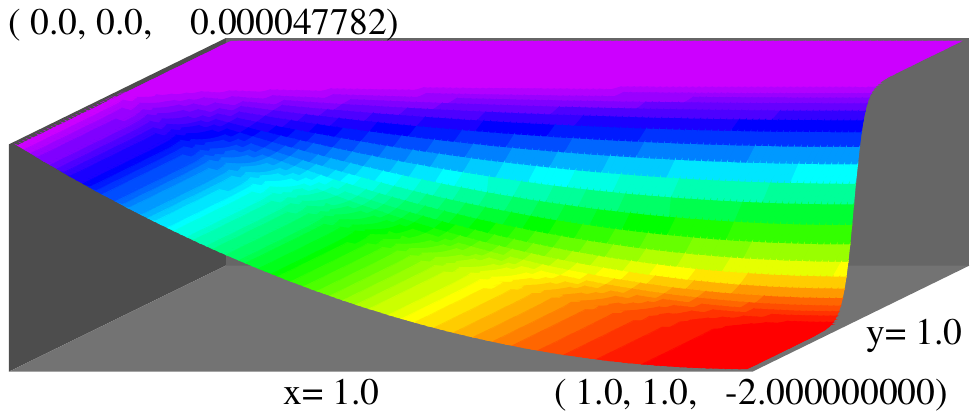}}  
  \put(0,-255){\includegraphics[width=400pt]{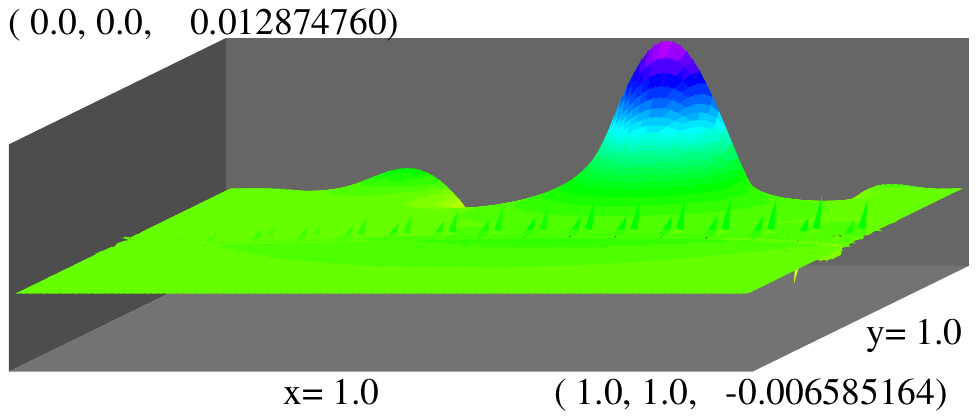}}  
  \put(0,-385){\includegraphics[width=400pt]{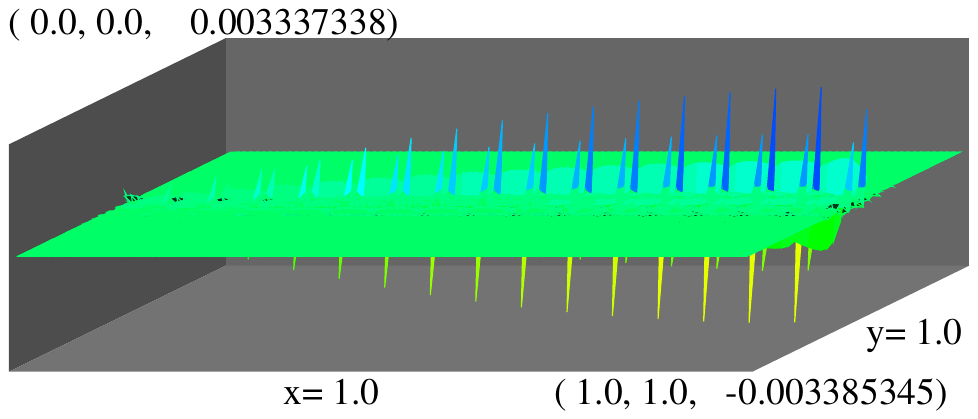}}   
 \end{picture}\end{center}
\caption{The $P_4$ WG solution $u_h$ for \eqref{s5} when $k^2=10^6$ (top), the error $u-u_h$ when $k^2=10$ and when $k^2=10^{6}$ (bottom). }\label{f-s5}
\end{figure}

 We compute the solution \eqref{s5} on the triangular grids shown in Figure \ref{f-2}, and on the non-convex polygonal 
  grids shown in Figure \ref{f-5}, by 
  the weak Galerkin $P_k$-$P_k$-$P_{k-1}^2$/$P_k^{2\times 2}$ finite elements, $k=2,3$ and $4$.
The results are listed in Tables \ref{t7}-\ref{t9}, 
    where   the   order of convergence is not stable, much worse than that of first two examples,
    due to the ill-posed Cauchy problem and the internal layer.

\begin{table}[H]
  \centering  \renewcommand{\arraystretch}{1.1}
  \caption{Error profile by the $P_2$ WG element  for computing \eqref{s5}. }
  \label{t7}
\begin{tabular}{c|cc|cc}
\hline
Grid $G_i$ & \quad $\| u-u_h\|_{0}$ & $O(h^r)$ & \  $\|\Delta_w( u-u_h)\|_0 $& $O(h^r)$   \\ \hline
    &  \multicolumn{4}{c}{On triangular meshes (Figure \ref{f-2}), $k^2=10$}    \\
\hline  
 4&    0.257E-01 &  2.3&    0.190E+02 &  1.2\\
 5&    0.403E-02 &  2.7&    0.129E+02 &  0.6\\
 6&    0.610E-03 &  2.7&    0.927E+01 &  0.5\\
\hline 
    &  \multicolumn{4}{c}{On triangular meshes (Figure \ref{f-2}),  $k^2=10^{6}$}    \\
\hline  
 4&    0.261E-02 &  0.0&    0.368E+02 &  1.1\\
 5&    0.123E-02 &  1.1&    0.287E+02 &  0.4\\
 6&    0.457E-03 &  1.4&    0.210E+02 &  0.5\\
\hline 
    &  \multicolumn{4}{c}{ On  polygonal meshes (Figure \ref{f-5}),  $k^2=10$}    \\
\hline   
 4&    0.178E+00 &  0.9&    0.118E+03 &  0.0\\
 5&    0.164E-01 &  3.4&    0.253E+02 &  2.2\\
 6&    0.998E-03 &  4.0&    0.306E+02 &  0.0\\
\hline 
    &  \multicolumn{4}{c}{ On  polygonal meshes (Figure \ref{f-5}),  $k^2=10^{6}$}    \\
\hline   
 4&    0.113E-01 &  0.0&    0.119E+03 &  0.5\\
 5&    0.220E-02 &  2.4&    0.417E+02 &  1.5\\
 6&    0.972E-03 &  1.2&    0.403E+02 &  0.1\\
\hline 
    \end{tabular}%
\end{table}%

\begin{table}[H]
  \centering  \renewcommand{\arraystretch}{1.1}
  \caption{Error profile by the $P_3$ WG element  for computing \eqref{s5}. }
  \label{t8}
\begin{tabular}{c|cc|cc}
\hline
Grid $G_i$ & \quad $\| u-u_h\|_{0}$ & $O(h^r)$ & \  $\|\Delta_w( u-u_h)\|_0 $& $O(h^r)$   \\ \hline
    &  \multicolumn{4}{c}{On triangular meshes (Figure \ref{f-2}), $k^2=10$}    \\
\hline  
 3&    0.377E-01 &  2.9&    0.391E+02 &  0.3\\
 4&    0.863E-02 &  2.1&    0.190E+02 &  1.0\\
 5&    0.809E-03 &  3.4&    0.101E+02 &  0.9\\
\hline 
    &  \multicolumn{4}{c}{On triangular meshes (Figure \ref{f-2}),  $k^2=10^{6}$}    \\
\hline  
 3&    0.270E-02 &  0.0&    0.314E+02 &  1.6\\
 4&    0.167E-02 &  0.7&    0.514E+02 &  ---\\
 5&    0.655E-03 &  1.4&    0.196E+02 &  1.4\\
\hline 
    &  \multicolumn{4}{c}{ On  polygonal meshes (Figure \ref{f-5}),  $k^2=10$}    \\
\hline   
 3&    0.116E+01 &  0.3&    0.324E+03 &  ---\\
 4&    0.456E+00 &  1.3&    0.107E+03 &  1.6\\
 5&    0.153E-01 &  4.9&    0.101E+03 &  0.1\\
\hline 
    &  \multicolumn{4}{c}{ On  polygonal meshes (Figure \ref{f-5}),  $k^2=10^{6}$}    \\
\hline   
 3&    0.135E-01 &  0.0&    0.322E+03 &  0.0\\
 4&    0.462E-02 &  1.5&    0.999E+02 &  1.7\\
 5&    0.313E-02 &  0.6&    0.973E+02 &  ---\\
\hline 
    \end{tabular}%
\end{table}%

\begin{table}[H]
  \centering  \renewcommand{\arraystretch}{1.1}
  \caption{Error profile by the $P_4$ WG element  for computing \eqref{s5}. }
  \label{t9}
\begin{tabular}{c|cc|cc}
\hline
Grid $G_i$ & \quad $\| u-u_h\|_{0}$ & $O(h^r)$ & \  $\|\Delta_w( u-u_h)\|_0 $& $O(h^r)$   \\ \hline
    &  \multicolumn{4}{c}{On triangular meshes (Figure \ref{f-2}), $k^2=10$}    \\
\hline  
 3&    0.358E-01 &  1.9&    0.241E+02 &  1.6\\
 4&    0.649E-02 &  2.5&    0.228E+02 &  0.1\\
 5&    0.104E-02 &  2.6&    0.299E+01 &  2.9\\
\hline 
    &  \multicolumn{4}{c}{On triangular meshes (Figure \ref{f-2}),  $k^2=10^{6}$}    \\
\hline  
 3&    0.202E-02 &  0.4&    0.743E+02 &  ---\\
 4&    0.118E-02 &  0.8&    0.407E+02 &  0.9\\
 5&    0.877E-04 &  3.7&    0.356E+01 &  3.5\\
\hline 
    &  \multicolumn{4}{c}{ On  polygonal meshes (Figure \ref{f-5}),  $k^2=10$}    \\
\hline   
 3&    0.635E+01 &  0.0&    0.672E+03 &  0.6\\
 4&    0.335E+00 &  4.2&    0.552E+03 &  0.3\\
 5&    0.297E-01 &  3.5&    0.741E+02 &  2.9\\
\hline 
    &  \multicolumn{4}{c}{ On  polygonal meshes (Figure \ref{f-5}),  $k^2=10^{6}$}    \\
\hline   
 3&    0.103E-01 &  1.3&    0.236E+03 &  2.0\\
 4&    0.111E-01 &  ---&    0.477E+03 &  ---\\
 5&    0.735E-03 &  3.9&    0.697E+02 &  2.8\\
\hline 
    \end{tabular}%
\end{table}%

\end{document}